\newcommand{\set}[1]{\left\{ #1\right\}}
\newcommand{\bR}{\displaystyle \mathbb{R}}
\newcommand{\bE}{\displaystyle \mathbb{E}}
\newcommand{\bP}{\displaystyle \mathbb{P}}
\newcommand{\bN}{\displaystyle \mathbb{N}}
\newtheorem{theorem}{Theorem}[section]
\newtheorem{lemma}[theorem]{Lemma}
\newtheorem{prop}[theorem]{Proposition}
\newtheorem{remark}[theorem]{Remark}
\newtheorem{defi}[theorem]{Definition}
\begin{document}
\title{Remarks on the Central Limit Theorem for Non-Convex Bodies}
\author{Uri Grupel\footnote{Supported by a grant from the European Research Council.}}
\maketitle

\begin{abstract}
	In this note, we study possible extensions of the Central Limit Theorem for non-convex bodies.
	First, we prove a Berry-Esseen type theorem for a certain class of unconditional bodies that are not necessarily convex.
	Then, we consider a widely-known class of non-convex bodies, the so-called p-convex bodies, and construct a counter-example for this class.
\end{abstract}

\section{Introduction}

	Let $X_1,...,X_n$ be random variables with
	$\bE X_i=0$ and $\bE X_iX_j=\delta_{i,j}$ for $i,j=1,2,...,n$. Let $\theta\in S^{n-1}$, where $S^{n-1}\subseteq \bR^n$ is the unit sphere centered at $0$, and let $G$ be a standard Gaussian random variable,
	that is $G$ has density function $\frac{1}{\sqrt{2\pi}}e^{-x^2/2}$. We denote $X=(X_1,...,X_n)$.
	In this paper we examine different conditions on $X$ under which $X\cdot\theta$ is close to $G$ in distribution.
	The classical central limit theorem states that if $X_1,...,X_n$ are independent then for most $\theta\in S^{n-1}$ the marginal $X\cdot\theta$ is close to $G$.
	It was conjectured by Anttila, Ball and Perissinaki \cite{ABP} and by Brehm and Voigt \cite{BV} that if
	$X$ is distributed uniformly in a convex body $K\subseteq\bR^n$, then for most $\theta\in S^{n-1}$ the marginal $X\cdot\theta$ is close to $G$.
	This is known as the central limit theorem for convex sets and was first proved by Klartag \cite{K}.\\
	
	In this note we examine extensions of the above theorem to non-convex settings.
	Our study was motivated by the following observation on the unit balls of $l_p$ spaces for $0<p<1$:\\
	
	We denote by $B_p^n=\set{x\in\bR^n;\; |x_1|^p+\cdots+|x_n|^p\leq 1}$ the unit ball of the space $l_p^n$.
	For $X=(X_1,...,X_n)$ that is distributed uniformly on $c_{p,n}B_p^n$, $p>0$, $\theta\in S^{n-1}$, and $G$ a standard Gaussian, one can show that
	$$\left|\bP(\theta\cdot X\leq t)-\bP(G \leq t)\right|\leq C_p \sum_{k=1}^n |\theta_k|^3$$
	where $c_{p,n}$ is chosen such that $\bE X_i=0$ and $\bE X_iX_j=\delta_{i,j}$ for $i,j=1,2,...,n$, and $C_p>0$ does not depend on $n$.\\

	In order to formulate our results we use the following definitions: Let $X=(X_1,...,X_n)$ be a random vector in $\bR^n$.
	A random vector $X$ is called isotropic if $\bE X_i=0$ and $\bE X_iX_j=\delta_{i,j}$ for $i,j=1,2,...,n$.
	A random vector $X$ is called unconditional if the distribution of $(\varepsilon_1X_1,...,\varepsilon_nX_n)$ is the same as the distribution of $X$ for any $\varepsilon_i=\pm 1,\; i=1,...,n$ .\\
	
	The first class of densities we define is based on Klartag's recent work \cite{K2} and includes the uniform distribution over $B_p^n$ for $0<p<1$.
	\begin{theorem}
		\label{thm:unconditional}
		Let $X$ be an unconditional, isotropic random vector with density $e^{-u(x)}$, where the function $u\left(x_1^{\kappa},...,x_n^{\kappa}\right)$ is convex in $\bR^n_+=\left\{x\in\bR^n;\; x_i\geq 0 \;\forall i\in\{1,2,...,n\}\right\}$ for $\kappa>1$. Let $G$ be a standard Gaussian random variable and $\theta\in S^{n-1}$. Then
		$$\left|\bP\left(\theta\cdot X\geq t\right)-\bP(G\geq t)\right|\leq C_{\kappa}\sum_{k=1}^{n}|\theta_k|^3,$$
		where $C_{\kappa}>0$ depends on $\kappa$ only, and does not depend on $n$.
	\end{theorem}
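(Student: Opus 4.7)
The plan is to apply Esseen's smoothing inequality, which controls the Kolmogorov distance between $\theta\cdot X$ and $G$ by an integral of $|\phi(t)-e^{-t^2/2}|/|t|$ against $dt$ over a bounded range, where $\phi(t)=\bE e^{it\theta\cdot X}$. Unconditionality yields the useful identity
$$\phi(t)=\bE\left[\prod_{k=1}^n\cos(t\theta_k X_k)\right],$$
obtained by averaging the density over all coordinate sign patterns and observing that the odd parts cancel. This already exhibits the product structure of the independent setting, even though the $X_k$ are coupled through $e^{-u(x)}$.

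The pivotal step is to make $e^{-u}$ look log-concave. Under the orthant-by-orthant change of variables $x_k=y_k^{1/\kappa}$, the hypothesis on $u(x_1^{\kappa},\ldots,x_n^{\kappa})$ says exactly that the pulled-back density is log-concave in $y$ up to the explicit Jacobian $\prod_k y_k^{(1/\kappa)-1}$, which for $\kappa>1$ is coordinate-wise decreasing (the favorable direction for FKG). This places us in the framework of Klartag's recent work \cite{K2}, from which I would borrow (or directly adapt) three ingredients: a uniform third-moment bound $\bE|X_k|^3\leq C_\kappa$; a sub-independence inequality of the form $\bE\prod_k X_k^{2a_k}\leq\prod_k\bE X_k^{2a_k}$, which for unconditional log-concave densities follows from FKG-type reasoning and transports back under $y\mapsto y^\kappa$ because that map is coordinate-wise monotone; and a uniform bound on the one-dimensional density of $\theta\cdot X$, used to estimate $\phi(t)$ in the large-$t$ regime.

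With these in hand, the Berry-Esseen finish is standard. Choose a cutoff $T$ of order $(\sum_k|\theta_k|^3)^{-1}$; for $|t|\leq T$, expand $\cos(t\theta_k X_k)=1-\tfrac{1}{2}(t\theta_k X_k)^2+R_k(t)$ with $|R_k(t)|\leq\tfrac{1}{6}|t\theta_k X_k|^3$, and use the sub-independence inequality to replace $\bE\prod_k\cos(t\theta_k X_k)$ by (essentially) $\prod_k\bE\cos(t\theta_k X_k)$ at the cost of an error controlled by the same third moments. The quadratic parts assemble into $e^{-t^2/2}$, using isotropicity and $\sum_k\theta_k^2=1$, while the cubic remainders contribute an error of order $C_\kappa t^3\sum_k|\theta_k|^3$; after division by $|t|$ and integration up to $T$ this is exactly the announced rate. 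For $|t|>T$, the density bound makes both $|\phi(t)|$ and $e^{-t^2/2}$ contribute only below the target level, and Esseen's inequality closes the argument.

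\textbf{Main obstacle.} The genuinely delicate ingredient is the sub-independence inequality. FKG-type correlation inequalities work cleanly only in the log-concave category; verifying that they survive both the change of variables $x_k=y_k^{1/\kappa}$ and the coupling introduced by the Jacobian, and that the resulting constants depend only on $\kappa$ with absolutely no $n$-dependence, is where essentially all the work lies. Without such a statement one cannot decouple the product of cosines, and the classical Berry-Esseen decomposition simply does not close; this is precisely the point at which the machinery of \cite{K2} enters.
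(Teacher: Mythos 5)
Your approach (Esseen's smoothing inequality on the characteristic function $\phi(t)=\bE\prod_k\cos(t\theta_k X_k)$) is genuinely different from the paper's, but the crucial decoupling step does not close as you describe, and the paper's route shows how to avoid the issue entirely. The gap: you propose to replace $\bE\prod_k\cos(t\theta_k X_k)$ by $\prod_k\bE\cos(t\theta_k X_k)$ ``at the cost of an error controlled by the same third moments,'' justified by a sub-independence inequality $\bE\prod_k X_k^{2a_k}\leq\prod_k\bE X_k^{2a_k}$. But sub-independence controls covariances of \emph{monotone} functions of the $X_k^2$, while $\cos(t\theta_k X_k)=1-\tfrac{1}{2}t^2\theta_k^2X_k^2+\cdots$ is neither monotone nor sign-definite as a function of $X_k^2$, so the inequality does not transfer. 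More fundamentally, even a perfect sub-independence statement gives a one-sided comparison, not a two-sided error bound of order $t^3\sum_k|\theta_k|^3$; there is simply no mechanism in your sketch that produces that estimate. You are right that this is ``where essentially all the work lies''; the point is that the work does not go through along this line.

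The paper sidesteps this by exploiting the fact that the Rademacher variables, not the $X_k$, are the genuinely independent ones. Writing $\theta\cdot X\overset{d}{=}\sum_k\varepsilon_k\theta_k X_k$ with i.i.d.\ signs $\varepsilon_k$ independent of $X$, one conditions on $X$ and applies the classical Berry--Esseen theorem to the weighted Rademacher sum, obtaining $\bP_\varepsilon(\cdot\geq t)\approx\bP(G\geq t/\sqrt{Y_n})$ up to an error $\sum_k|\theta_k X_k|^3/Y_n^{3/2}$ where $Y_n=\sum_k\theta_k^2X_k^2$. No decoupling of dependent random variables is needed at this stage. The remaining task is to show that $Y_n$ concentrates near $1$, and for that the paper uses Klartag's Poincar\'e inequality (Lemma~\ref{poincare}) to get $\textrm{Var}(Y_n)\leq C_\kappa\sum_k|\theta_k|^4$, together with a Borell-type moment bound (Lemma~\ref{borell_type}) for the third moments. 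Note that this variance bound is the analogue of the ``thin shell'' estimate, and is the structural input replacing the sub-independence inequality you were hoping to invoke; it has the advantage of being exactly what is needed, with constants depending only on $\kappa$, whereas a workable sub-independence statement for this class is not established. If you want to run a Fourier argument, you would still need to condition on $X$ first, write $\bE_X\prod_k\cos(t\theta_k X_k)\approx\bE_X e^{-t^2Y_n/2}$, and again reduce to the concentration of $Y_n$ --- at which point you have rederived the paper's structure.
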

	
	In order to see that Theorem \ref{thm:unconditional} includes the uniform distribution over $B_p^n$ for $0<p<1$ take
	$$u(x)=\left\{\begin{matrix}
		0,& x_1^p+\cdots+x_n^p\leq 1\\
		\infty, & \text{otherwise}
	\end{matrix}\right. ,$$
	and set $\kappa=\frac{1}{p}$.\\
	
	The error rate in Theorem \ref{thm:unconditional} is the same as in the classical Central Limit Theorem. For example, by choosing $\theta=\left(\frac{1}{\sqrt{n}},...,\frac{1}{\sqrt{n}}\right)$, we get an error rate of $O\left(\frac{1}{\sqrt{n}}\right)$.\\
	
	The symmetry conditions in Theorem \ref{thm:unconditional} are highly restrictive. Hence, we are led to study p-convex bodies, which satisfy fewer symmetry conditions and are shown to share some of the properties of convex bodies.\\
	
	We say that $K\subset\bR^n$ is {\it p-convex} with $0<p<1$ if $K=-K$ and for all $x,y\in K$ and $0<\lambda<1$, we have
	$$\lambda^{\frac{1}{p}}x+(1-\lambda)^{\frac{1}{p}}y\in K,$$
	These bodies are related to unit balls of $p-norms$ and were studied in relation to local theory of Banach spaces by Gordon and  Lewis \cite{GL}, Gordon and Kalton \cite{GK}, Litvak, Milman and Tomczak-Jaegermann \cite{LMT} and others (see \cite{BBP}, \cite{D}, \cite{Ka}, \cite{L}, \cite{M}).\\
	
	The following discussion explains why the class of p-convex bodies does not give the desired result.
	\begin{theorem}
		\label{thm:weakconvex}
		Set $N=n+n^{\frac{5}{2}}\log^2 n$. There exists a random vector $X$ distributed uniformly in a $\frac{1}{2}-$convex body $K\subseteq \bR^{N}$, and a subspace $E$ with $\textrm{dim}(E)=n$,
		such that for any $\theta\in S^{N-1}\cap E$, the random variable $\theta\cdot \textrm{Proj}_EX$ is not close to a Gaussian random variable in any reasonable sense (Kolmogorov distance, Wasserstein distance and others).
	\end{theorem}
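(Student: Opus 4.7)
The strategy is to build an explicit $\tfrac{1}{2}$-convex body $K \subseteq \mathbb{R}^{N}$ and identify a distinguished $n$-dimensional subspace $E$ so that the projection $\textrm{Proj}_{E}X$ is sharply concentrated near a discrete set of $2n$ ``spike'' points, and then verify that every one-dimensional marginal of $\textrm{Proj}_{E}X$ is $\Omega(1)$-far from the standard Gaussian. Such a spike-concentrated marginal density is impossible for convex $K$ because Brunn's theorem forces the cross-section volume to have concave $(N-n)$-th root; but $\tfrac{1}{2}$-convex bodies satisfy only a much weaker cross-section concavity, which suffices to allow sharply peaked marginal densities.

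Explicitly, set $E = \textrm{span}(e_{1},\ldots,e_{n})$, $F = E^{\perp}$, $\dim F = N-n = n^{5/2}\log^{2} n$, and define $K$ to be the $\tfrac{1}{2}$-convex hull of the $2n$ Euclidean balls $\sigma\sqrt{n}\,e_{i} + v_{i}^{\sigma} + r_{0} B_{F}$, where $v_{i}^{-} = -v_{i}^{+}$ (central symmetry) and the vectors $v_{i}^{+}\in F$ are chosen well-separated pairwise (say as scaled i.i.d.\ standard Gaussians). A generic point of $K$ has the form $\sum_{i,\sigma}\alpha_{i,\sigma}(\sigma\sqrt{n}\,e_{i} + v_{i}^{\sigma} + r_{0} b_{i,\sigma})$ with $\sum \alpha_{i,\sigma}^{1/2} \leq 1$, and a direct computation shows that the $F$-cross section of $K$ above $\sqrt{n}\,a\in E$ is a ball of radius $r_{0} R(a)$ for an explicit $R\colon\mathbb{R}^{n}\to[0,1]$ satisfying $R(a) = 1$ exactly at the $2n$ tips $a = \pm e_{i}$ (and $R(a) < 1$ elsewhere). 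Thus the density of $\textrm{Proj}_{E}X$, being proportional to $R(a)^{N-n}$, is exponentially concentrated at the tips, and $\textrm{Proj}_{E}X$ is well-approximated by the discrete random variable $\sqrt{n}\,\sigma\,e_{J}$ with $J \sim \textrm{Unif}\{1,\ldots,n\}$ and $\sigma \sim \textrm{Unif}\{\pm 1\}$ independent.

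With this, for any $\theta \in S^{N-1}\cap E$ the marginal $\theta\cdot\textrm{Proj}_{E}X$ is approximately the discrete distribution of $\sqrt{n}\,\sigma\,\theta_{J}$, supported on at most $2n$ atoms with fourth moment $n\sum_{i}\theta_{i}^{4}\in[1,n]$. The Kolmogorov/Wasserstein distance to $\mathcal{N}(0,1)$ is bounded below by a universal constant: via an atom at $0$ whenever some coordinate $\theta_{i}$ vanishes, via the fourth-moment discrepancy $|n\sum_{i}\theta_{i}^{4} - 3|$ otherwise, and --- for the exceptional $\theta$ whose atoms happen to align with Gaussian quantiles --- via the random choice of the $v_{i}^{+}$ breaking any accidental symmetry. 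Combining these estimates with an $\varepsilon$-net argument over $S^{n-1}\cap E$ upgrades the pointwise lower bound to a uniform one over all $\theta$.

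The main technical obstacle is quantitatively controlling the $\tfrac{1}{2}$-convex hull: one must show that the $\tfrac{1}{2}$-convex ``bridges'' formed between distinct spikes $(i,\sigma)\neq(j,\sigma')$ contribute a negligible volume compared to the spike regions themselves. Since the midpoint of any such bridge is $F$-far from both spike centers (provided the $v_{i}^{+}$ are pairwise separated in $F$), its $(N-n)$-dimensional cross-section is exponentially smaller than a spike tip's; enforcing this separation uniformly over $\binom{2n}{2}$ pairs via Gaussian concentration is what demands the $\log^{2} n$ factor in $\dim F$, while the polynomial $n^{5/2}$ factor arises from matching the exponential bridge-decay estimate against the total bridge ``length'' dictated by the $\ell_{1/2}$ geometry of $E$ and the associated quartic control on $R(a)$ near each tip.
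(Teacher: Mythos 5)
Your approach is genuinely different from the paper's, but it has a gap that I do not see how to close. The paper places a spherically symmetric density proportional to $\left(1-\frac{(n-1)f(|x|)}{N-n}\right)_{+}^{N-n}$ on the $n$-dimensional subspace $E$, where $f$ is a $\tfrac12$-convex radial profile engineered so that $r^{n-1}e^{-(n-1)f(r)}$ is essentially constant on a shell of width of order $\sqrt n$. Rotational invariance then makes every marginal $\theta\cdot\textrm{Proj}_EX$ identically distributed, approximately as $|G|\,W$ with $W$ uniform on a fixed interval, and this single law is uniformly far from Gaussian. You instead propose a spike construction in which $\textrm{Proj}_EX$ concentrates near $\pm\sqrt n\,e_i$, so the candidate limiting marginal is the $2n$-atom law of $\sqrt n\,\sigma\,\theta_J$. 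That law manifestly depends on $\theta$, and this is where the argument breaks.

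For your ``exceptional $\theta$ whose atoms align with Gaussian quantiles,'' the discrete law really is close to Gaussian: take $\theta_i$ proportional to $\Phi^{-1}\left(\tfrac12+\tfrac{i-1/2}{2n}\right)$ and normalize; the $2n$ atoms $\pm\sqrt n\,\theta_i$, each of weight $\tfrac1{2n}$, then interlace the Gaussian quantiles, and the Kolmogorov distance to $\mathcal N(0,1)$ is $O(1/n)$, which tends to zero. Your proposed remedy --- randomizing the $F$-offsets $v_i^+$ --- cannot repair this, because those offsets do not enter the $E$-marginal at all. A point of the $\tfrac12$-convex hull may be written as $\sum_{i,\sigma}\alpha_{i,\sigma}\left(\sigma\sqrt n\,e_i+\sigma v_i^+ + r_0 b_{i,\sigma}\right)$ with $\alpha_{i,\sigma}\geq0$, $\sum_{i,\sigma}\alpha_{i,\sigma}^{1/2}\leq 1$, $b_{i,\sigma}\in B_F$; fixing the $E$-component to $\sqrt n\,a$ forces $\alpha_{i,+}-\alpha_{i,-}=a_i$, and the $F$-cross-section is then the ball $\sum_i a_i v_i^+ + r_0R(a)B_F$, with $R(a)=\sup\left\{\sum_{i,\sigma}\alpha_{i,\sigma}\colon\alpha_{i,+}-\alpha_{i,-}=a_i,\ \sum_{i,\sigma}\alpha_{i,\sigma}^{1/2}\leq1\right\}$. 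The vectors $v_i^+$ shift the center of each cross-sectional ball but leave its radius $r_0R(a)$, hence its volume, hence the $E$-marginal density $\propto R(a)^{N-n}$, completely unchanged. So randomizing $v_i^+$ perturbs nothing in the $E$-marginal and cannot break the accidental alignment; the same computation also undercuts the claimed role of well-separated $v_i^+$ in controlling ``bridge'' volumes, which likewise depend only on $R(a)$. The paper's rotationally invariant construction avoids the difficulty at the source, since the limiting marginal law is literally the same for every $\theta$, eliminating any need for an $\varepsilon$-net or a case analysis over directions.
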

	A similar construction can be made for any fixed parameter $0<p<1$.
	Since $\textrm{dim}(E)$ tends to infinity with $n$, a similar theorem is not true in the convex case.
	Hence, the central limit theorem for convex sets cannot be extended for the p-convex case.
	Thus, we need to look for a new class of bodies (densities) that includes the $l_p^n$ unit balls, with a weaker condition than the unconditional one.\\
	
	\begin{remark}
		In \cite{L} Litvak constructed an example of a $p$-convex body for which the volume distribution is very different from the convex case.
		Litvak's work studies the large deviations regime for $p$-convex distributions, while our work is focused on the central limit theorem.
	\end{remark}
	
	Throughout the text the letters $c,C,c',C'$ will denote universal positive constants that do not depend on the dimension $n$.
	The value of the constant may change from one instance to another.
	We use $C_{\alpha},C(\alpha)$ for constants that depend on a parameter $\alpha$ and nothing else.
	$\sigma_{n-1}$ will denote the Haar probability measure on $S^{n-1}$.
	$f(n)=O(g(n))$ is the big O notation, i.e. there exists a constant $C>0$ such that $|f(n)|\leq C g(n),\; \forall n\in \bN$.\\
	
	{\it Acknowledgement.} This paper is part of the authors M.Sc thesis written under the supervision of Professor Bo'az Klartag whose guidance, support and patience were invaluable.
	In addition, I would like to thank Andrei Iacob for his helpful editorial comments. Supported by the European Research Council (ERC).
	
\section{A Class of Densities with Symmetries}
	In this section we use Klartag's recent work \cite{K2} in order to exhibit a family of functions, which includes the indicator functions of $l_p^{n}$ unit balls, for $0<p<1$, having almost Gaussian marginals.\\
	A special case of Theorem 1.1 in \cite{K2} gives us the following Lemma.
	\begin{lemma}\label{poincare} Let $\kappa> 1$ and let $\phi:\bR^n\rightarrow\bR$ be an unconditional probability density function such that $\phi(x_1^{\kappa},...,x_n^{\kappa})$ is convex on $\bR^n_+$.
	Let $X$ be a random vector with density $e^{-\phi(x)}$. Then
	$$\textrm{Var}|X|^2\leq c_{\kappa}\sum_{j=1}^n\bE|X_j|^4,$$
	where $c_{\kappa}$ depends only on $\kappa$.\end{lemma}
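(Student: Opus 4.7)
The plan is to derive the claimed variance bound directly from the weighted Poincar\'e-type inequality furnished by Theorem 1.1 of \cite{K2}. For unconditional densities of the form assumed in Lemma \ref{poincare}, that result produces a bound of the schematic form
$$\textrm{Var}(f(X))\leq c_{\kappa}\sum_{j=1}^{n}\bE\left[X_j^2\left(\partial_j f(X)\right)^2\right]$$
for sufficiently regular test functions $f:\bR^n\rightarrow\bR$, with a constant $c_{\kappa}$ depending only on $\kappa>1$. The lemma is then a one-line specialization of this inequality.

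First I would check that the hypotheses we have on $\phi$ -- unconditionality together with convexity of $\phi(x_1^{\kappa},\ldots,x_n^{\kappa})$ on $\bR^n_+$ -- are precisely the assumptions under which Klartag's theorem supplies the weighted inequality above. Given that the lemma is explicitly labelled as a special case of Theorem 1.1 of \cite{K2}, this verification should be essentially a matter of matching notation. Intuitively, the role of the exponent $\kappa>1$ is to play the substitute for log-concavity: passing to the variables $y_i=x_i^{1/\kappa}$ on the positive orthant produces a log-concave weight (since $\log y_i$ is concave and $\phi(y_1^{\kappa},\ldots,y_n^{\kappa})$ is convex), which is what allows Klartag's Brascamp--Lieb / moment-map machinery to go through.

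Next I would apply the weighted inequality to the specific test function $f(x)=|x|^2=x_1^2+\cdots+x_n^2$. Since $\partial_j f(x)=2x_j$, the right-hand side becomes
$$c_{\kappa}\sum_{j=1}^{n}\bE\left[X_j^2\cdot(2X_j)^2\right]=4c_{\kappa}\sum_{j=1}^{n}\bE|X_j|^4,$$
and absorbing the factor $4$ into the constant yields the stated bound on $\textrm{Var}|X|^2$.

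The only genuine subtlety is integrability: I should ensure both sides of the Poincar\'e inequality are finite before the substitution is legitimate. This is harmless -- if $\sum_j\bE|X_j|^4=\infty$ the conclusion is vacuous, and otherwise a standard truncation/approximation argument applied to $f(x)=|x|^2$ suffices. I do not expect any further obstacle, since the entire content of the lemma is inherited from the result being quoted; the only real work is pinpointing the correct specialization of Klartag's inequality.
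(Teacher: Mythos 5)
Your proposal is correct and matches the paper's approach: the paper states the lemma as an unelaborated special case of Theorem 1.1 in \cite{K2}, and the intended specialization is exactly the one you identify, namely applying Klartag's weighted Poincar\'e inequality $\textrm{Var}(f(X))\leq c_{\kappa}\sum_j\bE[X_j^2(\partial_j f(X))^2]$ with $f(x)=|x|^2$, so that $\partial_j f=2x_j$ gives $4c_\kappa\sum_j\bE|X_j|^4$.
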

	
	\begin{lemma} \label{borell_type} Let $\kappa\geq 1$ and let $\phi:\bR^n\rightarrow\bR$ be an unconditional probability density function such that $\phi(x_1^{\kappa},...,x_n^{\kappa})$ is convex
	on $\bR^n_+$. Let $X$ be a random vector with density $e^{-\phi(x)}$. Then for any $p\geq 1$ and $i=1,...,n$,
	$$\bE |X_i|^p \leq c_{p,\kappa} \left(\bE |X_i|^2\right)^{\frac{p}{2}}.$$\end{lemma}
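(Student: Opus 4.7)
The plan is to reduce Borell's inequality in the $\kappa>1$ setting to the classical one-dimensional Borell lemma for log-concave densities, via a change of variables that straightens out the $\kappa$-nonlinearity. Since $X$ is unconditional, the vector $(|X_1|,\dots,|X_n|)$ has density $2^n e^{-\phi(z)}$ on $\bR^n_+$. Set $W_i=|X_i|^{1/\kappa}$; using the Jacobian of $z_i=w_i^\kappa$, the joint density of $W=(W_1,\dots,W_n)$ on $\bR^n_+$ is proportional to
$$\exp\bigl(-\phi(w_1^\kappa,\dots,w_n^\kappa)\bigr)\prod_{i=1}^n w_i^{\kappa-1}.$$

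By hypothesis, $\psi(w):=\phi(w_1^\kappa,\dots,w_n^\kappa)$ is convex on $\bR^n_+$, and since $\kappa\geq 1$ the correction $(\kappa-1)\sum_i\log w_i$ coming from the Jacobian is concave on $(0,\infty)^n$. Hence the joint density of $W$ is log-concave on $\bR^n_+$, so by Pr\'ekopa's theorem the one-dimensional marginal of each $W_i$ is a log-concave density on $[0,\infty)$.

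Now I invoke the classical Borell moment comparison for a non-negative random variable $Y$ with log-concave density: for all $a\geq b>0$ one has $\bE Y^{a}\leq C_{a,b}(\bE Y^{b})^{a/b}$, with $C_{a,b}$ depending only on $a$ and $b$. Applied to $Y=W_i$ with $a=p\kappa$ and $b=2\kappa$, this yields
$$\bE|X_i|^p=\bE W_i^{p\kappa}\leq C_{p,\kappa}\bigl(\bE W_i^{2\kappa}\bigr)^{p/2}=C_{p,\kappa}\bigl(\bE|X_i|^2\bigr)^{p/2}$$
for $p\geq 2$. The range $1\leq p<2$ is handled immediately by Jensen's inequality.

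The only delicate point is the verification that the Jacobian contribution $(\kappa-1)\sum\log w_i$ is concave rather than convex: this is precisely where the hypothesis $\kappa\geq 1$ is used, and it is what allows the argument to go through in the non-log-concave regime $\kappa>1$ exactly as in the log-concave case $\kappa=1$. Beyond this observation, the proof is a routine assembly of Pr\'ekopa's theorem and the standard one-dimensional Borell lemma.
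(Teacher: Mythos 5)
Your proof is correct and takes essentially the same route as the paper: both pass to the coordinates $w_i=|x_i|^{1/\kappa}$, observe that the resulting density on $\bR^n_+$ is log-concave because $\phi(w_1^\kappa,\dots,w_n^\kappa)$ is convex and $(\kappa-1)\sum\log w_i$ is concave when $\kappa\geq 1$, and then invoke Borell's reverse H\"older inequality, with the $1\leq p<2$ case handled by Jensen/H\"older. The only cosmetic difference is that you insert Pr\'ekopa's theorem to reduce to a one-dimensional log-concave marginal of $W_i$ before applying the one-dimensional Borell lemma, whereas the paper applies the $n$-dimensional Borell/Berwald moment comparison to $|x_i|$ against the $n$-dimensional log-concave density directly.
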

	\begin{proof}
	If $p\leq 2$ then, by H\"older's inequality, we have $c_{p,\kappa}=1$. Assume that $p\geq 2$.
	Define $\pi:\bR^{n}_+\rightarrow\bR^n_+$ by $\pi(x)=\left(|x_1|^{\kappa},...,|x_n|^{\kappa}\right)$.
	The Jacobian of $\pi$ is $\prod_{j=1}^n \kappa|x_j|^{\kappa-1}$. Using the symmetry of $\phi$ we obtain
	$$\int_{\bR^n}|x_i|^pe^{-\phi(x)}dx=2^n\int_{\bR^n_+}|x_i|^pe^{-\phi(x)}dx=2^n\int_{\bR^n_+}|x_i|^{p\kappa}\left(\prod_{j=1}^n \kappa|x_j|^{\kappa-1}\right)e^{-\phi(\pi(x))}dx$$
	Now set $u(x)=\phi(\pi(x))-(\kappa-1)\displaystyle\sum_{j=1}^n \log|x_j|$. The function $e^{-u(x)}$ is log-concave on $\bR^n_+$, with $\kappa^n\displaystyle\int_{\bR^n_+}e^{-u(x)}=\frac{1}{2^n}$, and
	$$\int_{\bR^n_+}|x_i|^pe^{-\phi(x)}dx=\kappa^n\int_{\bR^n_+}|x_i|^{p\kappa}e^{-u(x)}dx.$$
	By Borell's Lemma (see \cite{Be}, \cite{Bo}, \cite{MP}) we obtain
	$$(2\kappa)^n\int_{\bR^n_+}|x_i|^{p\kappa}e^{-u(x)}dx\leq C_{\kappa,p} \left((2\kappa)^n\int_{\bR^n_+}|x_i|^{2\kappa}e^{-u(x)}dx\right)^{\frac{p}{2}}=
	C_{\kappa,p} \left(\int_{\bR^n}|x_i|^2e^{-\phi(x)}dx\right)^{\frac{p}{2}}$$
	\end{proof}
		
	\begin{lemma} \label{thin_2} Let $\kappa> 1$ and let $\phi:\bR^n\rightarrow\bR$ be an unconditional, isotropic probability density function such that $\phi(x_1^{\kappa},...,x_n^{\kappa})$ is convex
	on $\bR^n_+$. Let $X$ be a random vector with density $e^{-\phi(x)}$. Then, for any $a\in\bR^n$

	$$\textrm{Var}(a_1^2X_1^2+\cdots+a_n^2X_n^2)\leq C_{\kappa}\sum_{j=1}^n |a_j|^4.$$\end{lemma}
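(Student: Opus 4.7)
The plan is to reduce the statement to Lemma \ref{poincare} by a coordinatewise rescaling. Since the variance depends only on $a_j^2$, I may assume $a_j\geq 0$; moreover, discarding zero coordinates I may assume $a_j>0$ for all $j$. I would then define the rescaled random vector $Y=(a_1X_1,\ldots,a_nX_n)$, so that $|Y|^2=\sum_{j=1}^n a_j^2X_j^2$ and it suffices to bound $\textrm{Var}|Y|^2$.

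The first step is to verify that $Y$ satisfies the hypotheses of Lemma \ref{poincare}. Unconditionality is immediate because $\phi$ is unconditional and the $a_j$ are positive. The density of $Y$ has the form $e^{-\tilde\phi(y)}$ where
$$\tilde\phi(y)=\phi\!\left(\frac{y_1}{a_1},\ldots,\frac{y_n}{a_n}\right)+\sum_{i=1}^n\log a_i.$$
To check the $\kappa$-convexity condition, I would introduce the auxiliary variables $\tilde y_i=y_i/a_i^{1/\kappa}$, so that $y_i^{\kappa}/a_i=\tilde y_i^{\kappa}$, giving
$$\tilde\phi(y_1^{\kappa},\ldots,y_n^{\kappa})=\phi(\tilde y_1^{\kappa},\ldots,\tilde y_n^{\kappa})+\text{const}.$$
This is the composition of the linear (hence convex) map $y\mapsto\tilde y$ with $z\mapsto\phi(z_1^{\kappa},\ldots,z_n^{\kappa})$, which is convex on $\bR^n_+$ by hypothesis on $\phi$. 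Therefore $\tilde\phi(y_1^{\kappa},\ldots,y_n^{\kappa})$ is convex on $\bR^n_+$, as required.

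Applying Lemma \ref{poincare} to $Y$ then yields
$$\textrm{Var}|Y|^2\leq c_{\kappa}\sum_{j=1}^n \bE|Y_j|^4=c_{\kappa}\sum_{j=1}^n a_j^4\,\bE X_j^4.$$
Finally, Lemma \ref{borell_type} applied to $X$ with $p=4$, together with the isotropy $\bE X_j^2=1$, gives $\bE X_j^4\leq C_{\kappa}$, and combining the two inequalities gives the desired bound.

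The only conceptually nontrivial step is verifying that the convexity-after-$\kappa$-substitution hypothesis is preserved by the rescaling $X\mapsto Y$; this is the reason for choosing the exponent $1/\kappa$ in the change of variables. I do not anticipate any genuine obstacle beyond this bookkeeping, since once $Y$ is shown to inherit the hypotheses of Lemma \ref{poincare}, the rest is a routine concatenation of the two preceding lemmas.
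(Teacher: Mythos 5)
Your proof is correct and follows the same route as the paper: rescale to $Y=(a_1X_1,\ldots,a_nX_n)$, apply Lemma~\ref{poincare} to bound $\textrm{Var}|Y|^2$, and then use Lemma~\ref{borell_type} with $p=4$ together with isotropy. The paper compresses the first step into the phrase ``by applying a linear transformation,'' whereas you helpfully spell out why the substitution $\tilde y_i=y_i/a_i^{1/\kappa}$ shows the $\kappa$-convexity hypothesis is preserved under the rescaling.
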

	\begin{proof}
	By applying a linear transformation, Lemma \ref{poincare} gives
	$$\textrm{Var}(a_1^2X_1^2+\cdots+a_n^2X_n^2)\leq C_{\kappa}'\sum_{j=1}^n\bE a_j^4|X_j|^4.$$
	By Lemma \ref{borell_type}, we obtain
	$$\textrm{Var}(a_1^2X_1^2+\cdots+a_n^2X_n^2)\leq C_{\kappa}'\sum_{j=1}^n\bE a_j^4|X_j|^4\leq C_{\kappa}\sum_{j=1}^n a_j^4\left(\bE|X_j|^2\right)^2=C_{\kappa}\sum_{j=1}^n |a_j|^4.$$
	\end{proof}
	
	We are now ready to prove Theorem \ref{thm:unconditional}.
	\begin{proof} Since $X$ is unconditional,
	$$\bP\left(\theta\cdot X\geq t\right)=\bP\left(\sum_{k=1}^n\theta_k X_k\varepsilon_k\geq t\right),$$
	where $\varepsilon_1,...,\varepsilon_n$ are i.i.d. random variables distributed uniformly on $\{\pm 1\}$ that are independent of $X$.
	By the triangle inequality,
	$$\left|\bP\left(\sum_{k=1}^n\theta_k X_k\varepsilon_k\geq t\right)-\bP(G\geq t)\right| \leq
	\bE_X\left|\bP(G\geq t)- \bP_G\left(G\geq \frac{t}{\sqrt{\sum_{k=1}^n\theta_k^2 X_k^2}}\right)\right|$$
	$$+\bE_{X}\left| \bP_{\varepsilon}\left(\sum_{k=1}^n \varepsilon_k\theta_kX_k\geq t\right)-\bP_G\left(G \geq \frac{t}{\sqrt{\sum_{k=1}^n\theta_k^2X_k^2}}\right)\right|.$$
	We estimate each term separately. Denote $Y_n=\sum_{k=1}^n \theta_k^2X_k^2$. By the Berry-Esseen Theorem (see \cite{F}),
	\begin{align*}&\bE_{X}\left| \bP_{\varepsilon}\left(\sum_{k=1}^n \varepsilon_k\theta_kX_k\geq t\right)-\bP_G\left(G \geq \frac{t}{\sqrt{Y_n}}\right)\right|&\\
	&\leq C\left(\bE_X \sum_{k=1}^n \frac{|\theta_k|^3|X_k|^3}{\left(Y_n\right)^{\frac{3}{2}}}1_{\left[\frac{1}{2},\infty\right)}\left(Y_n\right)+2\bP\left(Y_n<\frac{1}{2}\right)\right)&\\
	&\leq C\left(10\sum_{k=1}^n \bE_X |\theta_k|^3|X_k|^3+2\bP\left(Y_n<\frac{1}{2}\right)\right)\leq C_{\kappa}\sum_{k=1}^n |\theta_k|^3 +C\bP\left(Y_n<\frac{1}{2}\right)&\end{align*}
	Here we used Lemma \ref{borell_type} to estimate $\bE|X_k|^3 $. Note that
	$$\bE_XY_n=\bE_X \sum_{j=1}^n\theta_j^2X_j^2=\sum_{j=1}^n\theta_j^2\bE_XX_j^2=\sum_{j=1}^n\theta_j^2=1,$$
	so by Chebyshev's inequality and Lemma \ref{thin_2}
	\begin{equation}
		\bP\left(\left|Y_n-1\right|\geq \frac{1}{2}\right)\leq \frac{Var\left(Y_n\right)}{\frac{1}{4}}\leq
	4C_{\kappa}\sum_{j=1}^n|\theta_j|^4
		\label{eq:var}
	\end{equation}
	Hence, since $|\theta_i|\leq 1$ for all $i=1,...,n$,
	$$\bE_{X}\left| \bP_{\varepsilon}\left(\sum_{k=1}^n \varepsilon_k\theta_kX_k\geq t\right)-\bP\left(G \geq \frac{t}{\sqrt{Y_n}}\right)\right|\leq
	C_{\kappa} \sum_{k=1}^n |\theta_k|^3.$$
	Now, in order to estimate $\bE_X\left|\bP(G\geq t)- \bP\left(G\geq \frac{t}{\sqrt{Y_n}}\right)\right|$
	we use (\ref{eq:var}) and Klartag's argument in \cite{K4} (Section 6, Lemma 7) and conclude that it is enough to show that
	$$\bE \left(\left(Y_n-1 \right)^2\left|Y_n\geq \frac{1}{2} \right.\right)\leq C\left(\sum_{j=1}^n|\theta_j|^3\right)$$
	By Lemma \ref{thin_2} we get
	$$\bE \left(Y_n-1 \right)^2=\textrm{Var}\left(Y_n\right)\leq C_{\kappa}\sum_{j=1}^n |\theta_j|^4$$
	Hence,
	\begin{align*}\bE \left(\left(Y_n-1 \right)^2\left|Y_n\geq \frac{1}{2} \right.\right)\leq
	\bE \left(Y_n-1 \right)^2 \bP\left(Y_n\geq \frac{1}{2}\right)^{-1}\leq C_{\kappa}\left(\sum_{j=1}^n |\theta_j|^4\right)\bP\left(Y_n\geq \frac{1}{2}\right)^{-1}\end{align*}
	From inequality (\ref{eq:var}) it follows that
	$$\left(\bP\left(Y_n\geq \frac{1}{2}\right)\right)^{-1}=\bP\left(\sum_{j=1}^n\theta_j^2X_j^2\geq \frac{1}{2}\right)^{-1}\leq	\frac{1}{1-C_{\kappa}\sum_{j=1}^n|\theta_j|^4}.$$
	We may assume that $\displaystyle\sum_{j=1}^n|\theta_j|^4$ is bounded by some small positive constant depending on $\kappa$, since otherwise the result is trivial, and obtain
	$$\frac{1}{1-C_{\kappa}\sum_{j=1}^n|\theta_j|^4} \leq 1+C_{\kappa}\sum_{j=1}^n|\theta_j|^4$$
	which completes our proof.\end{proof}
	
\section{The p-Convex Case}
	In this section we construct a random vector $X$, distributed uniformly in a $\frac{1}{2}-$convex body $K$, such that for a large subspace $E\subseteq \bR^n$
	the random vector $\textrm{Proj}_E X$ has no single approximately Gaussian marginal. We define a function $f:\bR_+\rightarrow\bR_+$ such that the radial density $r^{n-1}e^{-f(r)}$ is spread across an interval of length proportional to $\sqrt{n}$; that is, we want
	$r^{n-1}e^{-f(r)}$ to be constant (or close to constant) on such an interval. Such densities have marginals that are far from Gaussian. We use the density function introduced above and an approximation argument to construct the desired body $K$.\\
	
	In order to construct a p-convex body from a function $f$, we restrict ourselves to {\it p-convex functions}.
	
	\begin{defi}
		A function $f:\bR^n\rightarrow\bR\cup\{\infty\}$ is called {\it p-convex} if for any $x,y\in \bR^n$ and $t\in[0,1]$,
	\begin{equation}
		f\left(t^{\frac{1}{p}}x+(1-t)^{\frac{1}{p}}y\right)\leq t f(x)+ (1-t)f(y).
	\label{eq:weak}
	\end{equation}
	\end{defi}
	
	The following proposition allows us to construct a p-convex body with $0<p<1$ from a p-convex function.
	
	\begin{prop} \label{func_bodies} For $\psi:\bR^n\rightarrow\bR_+$ p-convex function with $0<p<1$ and fixed $N>0$, define $f_N(x)=\left(1-\dfrac{\psi(x)}{N}\right)^N_+$.
	Then the set
  $$K_N(\psi)=\set{(x,y);\; x\in\bR^n,\; y\in\bR^N,\; |y|< f_N^{\frac{1}{N}}(x)}$$
  is p-convex.
  \end{prop}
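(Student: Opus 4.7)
The plan is to verify directly the two defining properties of a $p$-convex body for $K_N(\psi)$. Symmetry $K_N(\psi)=-K_N(\psi)$ is immediate provided $\psi$ is even in $x$ (a hypothesis I would add or regard as implicit), since the defining condition $|y|<f_N^{1/N}(x)$ depends on $y$ only through $|y|$. The substantive work is the $p$-convex combination property: given $(x_1,y_1),(x_2,y_2)\in K_N(\psi)$ and $\lambda\in(0,1)$, one must verify that
$$\bigl(\lambda^{1/p}x_1+(1-\lambda)^{1/p}x_2,\ \lambda^{1/p}y_1+(1-\lambda)^{1/p}y_2\bigr)\in K_N(\psi).$$

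I would attack this through a short chain of reductions. First, the Euclidean triangle inequality applied to the $y$-coordinate, together with the membership of the two base points, reduces the task to the scalar inequality
$$\lambda^{1/p}f_N^{1/N}(x_1)+(1-\lambda)^{1/p}f_N^{1/N}(x_2)\ \le\ f_N^{1/N}\bigl(\lambda^{1/p}x_1+(1-\lambda)^{1/p}x_2\bigr).$$
Since $1/p>1$ yields $\lambda^{1/p}\le\lambda$ and $(1-\lambda)^{1/p}\le 1-\lambda$, and $f_N^{1/N}\ge 0$, it suffices to prove the stronger estimate in which $\lambda^{1/p}$ and $(1-\lambda)^{1/p}$ are replaced by $\lambda$ and $1-\lambda$ respectively.

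To close the argument, I would observe that membership of $(x_i,y_i)$ in $K_N(\psi)$ forces $\psi(x_i)<N$, so the truncation in $f_N^{1/N}=(1-\psi/N)_+$ is inactive at these points and $f_N^{1/N}$ is an affine function of $\psi$ there. Applying the $p$-convexity of $\psi$, inequality (\ref{eq:weak}), and pushing through the order-reversing affine map $t\mapsto 1-t/N$ then delivers exactly what is needed. I do not anticipate any genuine obstacle: the whole proof is a bookkeeping exercise resting on the triangle inequality, the $p$-convexity of $\psi$, and the elementary bound $\lambda^{1/p}\le\lambda$ for $p\in(0,1)$ and $\lambda\in[0,1]$. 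The only subtlety worth flagging is that the truncation $(\cdot)_+$ must be harmless at the relevant points, and this is exactly what the membership condition guarantees.
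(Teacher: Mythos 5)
Your argument is correct and matches the paper's proof essentially step for step: reduce to the scalar inequality on $f_N^{1/N}$ via the Euclidean triangle inequality in the $y$-coordinate, exploit $\lambda^{1/p}\le\lambda$ for $0<p<1$, note that the membership condition forces $f_N(x_i)>0$ so the truncation is inactive and $f_N^{1/N}(x_i)=1-\psi(x_i)/N$, and then push the $p$-convexity of $\psi$ through the order-reversing affine map $t\mapsto 1-t/N$ together with $(1-u)_+\ge 1-u$ at the combined point. You also correctly flag that $K_N(\psi)=-K_N(\psi)$ requires $\psi$ to be even, a hypothesis the paper leaves implicit (its $\psi$ is radial) and whose verification the paper's proof likewise omits.
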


  \begin{proof}
  	Let $(x_1,y_1),(x_2,y_2)\in K_N(\psi)$. Since $(x_i,y_i)\in K_N(\psi)$ we have
  	$f_N(x_i)>0$. Therefore,
  	$$f_N^{\frac{1}{N}}(x_i)=1-\frac{\psi(x_i)}{N}.$$
  	Let $0\leq t \leq 1$ we get
	  $$f_N^{\frac{1}{N}}(t^{\frac{1}{p}}x_1+(1-t)^{\frac{1}{p}}x_2)\geq 1-\frac{1}{N}\psi(t^{\frac{1}{p}}x_1+(1-t)^{\frac{1}{p}}x_2)\geq 1-\frac{1}{N}(t\psi(x_1)+(1-t)\psi(x_2))=$$
	  $$=tf_N^{\frac{1}{N}}(x_1)+(1-t)tf_N^{\frac{1}{N}}(x_2)> t|y_1|+(1-t)|y_2|\geq |t^{\frac{1}{p}}y_1|+|(1-t)^{\frac{1}{p}}y_2|\geq |t^{\frac{1}{p}}y_1+(1-t)^{\frac{1}{p}}y_2|.$$
	  Hence, $t^{\frac{1}{p}}(x_1,y_1)+(1-t)^{\frac{1}{p}}(x_2,y_2)\in K_N(\psi)$, as needed.
	\end{proof}
	
	\begin{prop} \label{counter_log} There exists a universal constant $C>0$ such that, for $a\geq C$ the function
	$$f(x)=\left\{\begin{matrix}
		\log a, & if\; 0\leq x\leq a\\
		\log x, & if\; a\leq x\leq 2a\\
		\sqrt{x}-\sqrt{2a}+\log 2a, & if\; 2a \leq x
	\end{matrix}\right. $$	
	is $\frac{1}{2}-$convex.\end{prop}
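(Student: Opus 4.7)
The plan is to reparametrize and then do a careful case analysis. Substituting $s=t^2$ and $r=(1-t)^2$ in the definition of $\tfrac{1}{2}$-convexity, the condition becomes
$$f(sx+ry)\le\sqrt{s}\,f(x)+\sqrt{r}\,f(y),\qquad s,r\ge 0,\ \sqrt{s}+\sqrt{r}=1.$$
Since the inequality is symmetric under $(x,s)\leftrightarrow(y,r)$, I may assume $x\le y$, and split into cases based on which of $I_1=[0,a]$, $I_2=[a,2a]$, $I_3=[2a,\infty)$ contain $x$, $y$, and $z:=sx+ry$.

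Two classes of cases are immediate. When $x,y\in I_1$ the function $f$ is constant and the inequality is an equality. When $x,y\in I_3$ the sub-additivity $\sqrt{sx+ry}\le\sqrt{s}\sqrt{x}+\sqrt{r}\sqrt{y}$ combined with $\sqrt{s}+\sqrt{r}=1$ handles the sub-case $z\in I_3$ directly (the constants $-\sqrt{2a}+\log 2a$ telescope), while for $z\le 2a$ monotonicity of $f$ gives $f(z)\le\log 2a\le\sqrt{s}f(x)+\sqrt{r}f(y)$ since $f(x),f(y)\ge\log 2a$.

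The log-regime cases, where $x\in I_1\cup I_2$, $y\in I_2$, and $z\in I_2$, all reduce (after writing $u=\sqrt{r}$ and $\beta=y/x\in[1,2]$) to the inequality
$$(1-u)^2+u^2\beta\le\beta^u,\qquad u\in[0,1],\ \beta\in[1,2].$$
I verify this by checking that $h(u)=\beta^u-(1-u)^2-u^2\beta$ vanishes at $u=0$ and $u=1$ and is concave on $[0,1]$: its second derivative $\beta^u(\log\beta)^2-2(1+\beta)\le 2(\log 2)^2-6<0$ is negative throughout, so $h\ge 0$ on $[0,1]$.

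The main obstacle lies in the cross-piece cases with $x\in I_1\cup I_2$ and $y\in I_3$. When $z\in I_3$, sub-additivity of $\sqrt{\cdot}$ together with the cancellation $(\sqrt{s}+\sqrt{r})(-\sqrt{2a}+\log 2a)=-\sqrt{2a}+\log 2a$ reduces the inequality to $\sqrt{x}+\log 2\le\sqrt{2a}$, which is exactly the condition $(\sqrt{2}-1)\sqrt{a}\ge\log 2$; this is precisely what fixes the threshold $a\ge C$ in the statement. When instead $z\in I_2$ the target becomes $\log((1-u)^2+u^2\beta)\le u(\log 2+\sqrt{a}(\sqrt{\beta}-\sqrt{2}))$ with $\beta=y/a\ge 2$. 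At $\beta=2$ this coincides with the log-regime inequality above, and for $\beta>2$ one must check that the extra term $u\sqrt{a}(\sqrt{\beta}-\sqrt{2})$ on the right dominates the extra logarithmic gain on the left for all admissible $(u,\beta)$. This last sub-case is the most delicate step: $z\in I_2$ confines $(u,\beta)$ to a curved region (roughly $u^2\beta\asymp 1$) where the two sides are closely balanced, and handling it requires the threshold $a\ge C$ a second time, together with a careful derivative comparison in $\beta$ to propagate the equality at $\beta=2$ to the full range.
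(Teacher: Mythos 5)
Your overall decomposition matches the paper's: reparametrize by $u=\sqrt{r}$, split by which of $I_1,I_2,I_3$ contains $x$, $y$, $z$, verify the logarithmic inequality $(1-u)^2+u^2\beta\le\beta^u$ by a convexity/concavity argument, and handle the tail by subadditivity of $\sqrt{\cdot}$. The concavity check for the log inequality (second derivative $\beta^u(\log\beta)^2-2(1+\beta)<0$) is a clean variant of the paper's device, which instead fixes $t$ and exploits convexity in $c=\beta$ over $[1,4]$; both are fine, and your restriction to $\beta\in[1,2]$ is legitimate once one first replaces $x\in I_1$ by $x=a$ via monotonicity of $f$, a step the paper states explicitly and you leave implicit.

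However, there are two genuine problems. First, your reduction of the case $x\in I_1\cup I_2$, $y\in I_3$, $z\in I_3$ to ``$\sqrt{x}+\log 2\le\sqrt{2a}$'' is wrong: after subadditivity and the telescoping of the affine constants the inequality becomes $\sqrt{x}-\log x\le\sqrt{2a}-\log 2a$ for $x\in[a,2a]$, and near $x=2a$ this is an equality, so the cruder bound $\sqrt{x}+\log 2\le\sqrt{2a}$ (obtained by replacing $\log x$ with $\log a$) fails there. The correct observation, as in the paper, is that $t\mapsto\sqrt{t}-\log t$ is nondecreasing for $t\ge 4$, giving the desired inequality for $a\ge 4$; the universal threshold does \emph{not} originate here.

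Second and more importantly, the case you identify as ``the most delicate step'' --- $x\in I_1\cup I_2$, $y\in I_3$, $z\in I_2$ --- is precisely where the substance of the proposition lies, and you describe the difficulty rather than resolve it. The paper splits this into $y\in[2a,4a]$ (handled by the log inequality on $[1,4]$ together with $\log y-\log 2a\le\sqrt{y}-\sqrt{2a}$) and $y\ge 4a$, and the latter requires showing that $g(t)=\log(t^2x+(1-t)^2y)-t\log x-(1-t)(\sqrt{y}-\sqrt{2a}+\log 2a)$ satisfies $g'(t)\ge 0$, using $(1-t)\sqrt{y}\le\sqrt{2a}$ and $t^2x+(1-t)^2y\ge a$ to obtain the explicit condition $\left(1-\tfrac{1}{\sqrt 2}\right)a-2\sqrt{2a}\ge 0$, i.e.\ $a\ge 100$. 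This derivative computation is where the universal constant $C$ actually arises, and without it your argument is a plan, not a proof.
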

	
	\begin{proof}
		We begin by verifying that the function $f$ is $\frac{1}{2}-$convex for each interval $[0,a],\;[a,2a],\;[2a,\infty)$.
		Then we need to check that condition (\ref{eq:weak}) holds when $x$ and $y$ are from different intervals. By symmetry, we may assume that $x<y$.
		The cases $x,y\in[0,a]$ and $x,y\in[2a,\infty)$ are straightforward. In order for condition (\ref{eq:weak}) to hold for the function $\log x$ on an interval $[a,b]$ we must show that for any $x,y\in [a,b]$
		\begin{equation}\log((1-t)^2x+t^2y)\leq (1-t)\log(x)+t\log(y)=\log\left(x^{1-t}y^t\right). \label{weaklog} \end{equation}
		This is equivalent to
		$$(1-t)^2x+t^2y-x^{1-t}y^{t}\leq 0.$$
		Setting here $y=cx$, we obtain
		$$(1-t)^2+t^2c-c^{t}\leq 0.$$
		This inequality holds for every $1\leq c\leq 4$ and $0\leq t \leq 1$.
		To see that note that $g(t,c)=(1-t)^2+t^2c-c^{t}$ is a convex function in $c$ (as a sum of convex functions).
		Hence, it is enough to verify that $g(t,1)\leq 0$ and $g(t,4)\leq 0$ for any $0\leq t \leq 1$. Indeed,
		$$g(t,1)=(1-t)^2+t^2-1=2t(t-1)\leq 0$$
		and
		$$g(t,4)=(1-t)^2+t^24-4^{t}\Rightarrow \frac{\partial^2 g(t,4)}{\partial t^2}=2+8-(\log4)^24^t\geq 2.$$
		Hence, $g(t,4)$ is convex in $t$. Since $g(0,4)=g(1,4)=0$, we obtain, $g(t,4)\leq 0$ for all $0\leq t\leq 1$.\\
		Consequently (\ref{weaklog}) holds for any interval of the form $[a,b]\subseteq[a,4a]$.\\
		Next, we verify condition (\ref{eq:weak}) for $f$ when $x\in[a,2a]$, $y\in[2a,\infty)$, and $t^2x+(1-t)^2y\in[a,2a]$.
		We consider two cases
		\begin{enumerate}
			\item
			$y\in[2a,4a]$. By inequality (\ref{weaklog}),
			\begin{align*}
				f(t^2x+(1-t)^2y)&=\log(t^2x+(1-t)^2y)\leq t\log(x)+(1-t)\log(y)\\
				&\leq\log(x)+(1-t)(\log(2a)+\sqrt{y}-\sqrt{2a})=tf(x)+(1-t)f(y).
			\end{align*}
			The second inequality holds thanks to the elementary inequality $\log(y)-\log(2a)\leq \sqrt{y}-\sqrt{2a}$.
			Since for $y=2a$ we have equality, and $(\sqrt{y})'=\frac{1}{2\sqrt{y}}\geq \frac{1}{y}=(\log(y))'$ for $y\geq 4$, the inequality holds if $2a\geq 4$.
			
			\item
			$y\geq 4a$. Define
			$$g(t)=\log(t^2x+(1-t)^2y)-t\log(x)-(1-t)(\sqrt{y}- \sqrt{2a} +\log(2a)).$$
			We need to show that $g(t)\leq 0$ for all $t\in[0,1]$. Since $g(1)=0$, it is enough to show that $g'(t)\geq 0$ for all $0\leq t\leq 1$. We have,
			\begin{align*}g'(t)&=\frac{2tx-2(1-t)y}{t^2x+(1-t)^2y}-\log(x)+\sqrt{y}-\sqrt{2a}+\log(2a)&\\
			&\geq \frac{2tx-2(1-t)y}{t^2x+(1-t)^2y}+\left(1-\frac{1}{\sqrt{2}}\right)\sqrt{y}&\end{align*}
			Hence, if $2tx-2(1-t)y+\left(1-\frac{1}{\sqrt{2}}\right)\sqrt{y}(t^2x+(1-t)^2y)\geq 0$, then $g'(t)\geq 0$.
			Recalling that $t^2x+(1-t)^2y\geq a$, it suffices to prove that
			$$2tx-2(1-t)y+\left(1-\frac{1}{\sqrt{2}}\right)\sqrt{y}a\geq 0.$$
			Using the fact that $(1-t)^2y\leq t^2x+(1-t)^2y\leq 2a$, we obtain $(1-t)\sqrt{y}\leq \sqrt{2a}$. Hence,
			\begin{align*}
				2tx-2(1-t)y+\left(1-\frac{1}{\sqrt{2}}\right)a\sqrt{y}&\geq 2ta-2\sqrt{2a}\sqrt{y}+\left(1-\frac{1}{\sqrt{2}}\right)a\sqrt{y}\\
				&\geq\sqrt{y}\left(\left(1-\frac{1}{\sqrt{2}}\right)a-2\sqrt{2a}\right).
			\end{align*}
			This gives the condition
			$$\left(1-\frac{1}{\sqrt{2}}\right)a-2\sqrt{2a}\geq 0,$$
			Which is satisfied for $a\geq 100$.
		\end{enumerate}
		
		When $x\in[a,2a]$ and $y\in[2a,\infty)$ and $t^2x+(1-t)^2y\geq 2a$, we have
		$$f(t^2x+(1-t)^2y)=\sqrt{t^2x+(1-t)^2y}-\sqrt{2a}+\log 2a\leq t\sqrt{x}+(1-t)\sqrt{y}-\sqrt{2a}+\log 2a$$
		and
		$$tf(x)+(1-t)f(y)=t\log x + (1-t)(\sqrt{y}-\sqrt{2a}+\log 2a).$$
		Hence, (\ref{eq:weak}) holds thanks to the elementary inequality $\log 2a - \log x + \sqrt{x} - \sqrt{2a}\leq 0$, which holds for $a\geq 4$.\\
		
		If $x\in[0,a]$, then $f(x)=f(a)$ and $f(t^2x+(1-t)^2y)\leq f(t^2a+(1-t)^2y)$. Hence, for $x\in[0,a]$ and $y\in[a,\infty)$ we have
		$$f(t^2x+(1-t)^2y)\leq f(t^2a+(1-t)^2y) \leq tf(a)+(1-t)f(y) = tf(x)+(1-t)f(y).$$
	\end{proof}
	
	\begin{prop}
		Let $f:\bR_+\rightarrow\bR_+$ be a p-convex function with parameter $0<p<1$. Then $x\mapsto f(|x|)$ is a p-convex function on $\bR^n$.
	\end{prop}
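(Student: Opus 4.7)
The plan is to reduce the p-convexity of $x \mapsto f(|x|)$ on $\bR^n$ to the p-convexity of $f$ on $\bR_+$, using the triangle inequality on $\bR^n$ as the bridge. Fix $x,y\in\bR^n$ and $t\in[0,1]$, and write $g(z)=f(|z|)$. First I would apply the triangle inequality in $\bR^n$ to the p-convex combination to obtain
\[ \bigl|t^{1/p} x + (1-t)^{1/p} y\bigr| \;\le\; t^{1/p}|x| + (1-t)^{1/p}|y|. \]

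Next I would combine this with the one-dimensional p-convexity hypothesis on $f$. Assuming $f$ is non-decreasing on $\bR_+$, the displayed inequality lets me pass to an upper bound inside $f$, and then the p-convexity of $f$ on $\bR_+$ applied to the scalars $|x|$ and $|y|$ gives
\[ f\bigl(|t^{1/p} x + (1-t)^{1/p} y|\bigr) \;\le\; f\bigl(t^{1/p}|x| + (1-t)^{1/p}|y|\bigr) \;\le\; t f(|x|) + (1-t) f(|y|), \]
which is exactly $g(t^{1/p}x + (1-t)^{1/p}y) \le t g(x) + (1-t) g(y)$, the required p-convexity of $g$.

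The only delicate point is that the first inequality in the display above uses that $f$ is non-decreasing. This is natural in the intended radial-profile setting, and in particular the function constructed in Proposition~\ref{counter_log} is manifestly non-decreasing (piecewise non-decreasing on $[0,a]$, $[a,2a]$ and $[2a,\infty)$, with matching values at the endpoints). With monotonicity in hand, the proof is just the two-line chain above; no additional machinery is required, and I expect no other obstacle.
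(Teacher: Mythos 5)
The core chain of inequalities you write is exactly the one in the paper: bound $|t^{1/p}x+(1-t)^{1/p}y|$ by $t^{1/p}|x|+(1-t)^{1/p}|y|$ via the triangle inequality, then apply monotonicity of $f$ and one-dimensional $p$-convexity. But you have left a genuine gap: the proposition is stated for an arbitrary $p$-convex $f:\bR_+\to\bR_+$, with no monotonicity hypothesis, and your proof explicitly hinges on $f$ being non-decreasing. Appealing to the fact that the particular $f$ of Proposition~\ref{counter_log} is visibly monotone does not prove the proposition as stated; it only handles one instance of it.

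The missing step is precisely what the paper spends most of its proof on: showing that \emph{any} $p$-convex function on $\bR_+$ with $0<p<1$ is automatically non-decreasing. The mechanism is worth internalizing. Taking both arguments equal to $y$ in the defining inequality gives
\[
f\bigl((t^{1/p}+(1-t)^{1/p})\,y\bigr)\le f(y),
\]
and since $1/p>1$ the map $t\mapsto t^{1/p}+(1-t)^{1/p}$ sweeps out the whole interval $[2^{1-1/p},1]$ as $t$ ranges over $[0,1/2]$. Thus $f(x)\le f(y)$ whenever $2^{1-1/p}y\le x\le y$, and iterating this (the paper does it by induction on the smallest $k$ with $2^{-k(1/p-1)}y\le x$) yields $f(x)\le f(y)$ for all $0<x<y$. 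Once you add this lemma, your two-line chain completes the argument and matches the paper's proof.
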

	
	\begin{proof}
		First, we prove that $f$ is non-decreasing. Let $0<x<y$. There exists some $k\geq 1$ such that $2^{-k\left(\frac{1}{p}-1\right)}y\leq x$.
		We proceed by induction on $k$. For $k=1$, note that $h(t)=t^{\frac{1}{p}}y+(1-t)^{\frac{1}{p}}y$ is continuous, $h(0)=y$, and $h\left(\frac{1}{2}\right)= 2^{-\left(\frac{1}{p}-1\right)}y$.
		Hence, there exists some $0\leq t_0\leq 1$ for which $h(t_0)=x$, and so
		$$f(x)=f(t^{\frac{1}{p}}_0y+(1-t_0)^{\frac{1}{p}}y)\leq t_0f(y)+(1-t_0)f(y)=f(y)$$
		For $k\geq 2$, $f(2^{-(k-1)\left(\frac{1}{p}-1\right)}y)\leq f(y)$ by the induction hypothesis, and by the same argument as above
		$$f(x)\leq f(2^{-(k-1)\left(\frac{1}{p}-1\right)}y)\leq f(y).$$
		We thus showed that $f$ is monotone non-decreasing.
		Now, by the triangle inequality, for any $x,y\in\bR^n$ and $0<t<1$ we have
		$$f(|t^{\frac{1}{p}}x+(1-t)^{\frac{1}{p}}y|)\leq f(t^{\frac{1}{p}}|x|+(1-t)^{\frac{1}{p}}|y|)\leq tf(|x|)+(1-t)f(|y|)$$
	\end{proof}

	Using the function from Proposition \ref{counter_log}, we are ready to construct the $\frac{1}{2}-$convex body $K$ and prove Theorem \ref{thm:weakconvex}.
	
	\begin{defi}
		A sequence of probability measures $\{\mu_n\}$ on $\bR^n$ is called essentially isotropic if $\int xd\mu_n(x)=0$ and $\int x_ix_jd\mu_n(x)=(1+\varepsilon_n)\delta_{ij}$ for all $i,j=1,...,n$,
		when $\varepsilon_n	\underset{n\rightarrow\infty}{\longrightarrow} 0$.
	\end{defi}
	
	\begin{prop} \label{iso0} The probability measure $d\mu=C_ne^{-(n-1)f(|x|)}dx$ , where $f$ is defined as in Proposition \ref{counter_log}, with $a=\sqrt{\frac{3}{7}n}$, is essentially isotropic. That is,
	$$\int x_ix_jd\mu(x)=(1+\varepsilon_n)\delta_{ij}$$
	for all $i,j=1,2,...,n$, when $|\varepsilon_n|\leq \frac{C}{n}$.\end{prop}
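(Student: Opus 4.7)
\noindent\textbf{Proof sketch for Proposition \ref{iso0}.} The measure $d\mu = C_n e^{-(n-1)f(|x|)}dx$ is rotationally invariant, so $\int x_i\, d\mu = 0$ trivially, and $\int x_i x_j\, d\mu = 0$ for $i \neq j$ by odd symmetry. Moreover $\int x_i^2 d\mu = \int x_j^2 d\mu = \frac{1}{n}\int|x|^2 d\mu$ for every $i,j$. Hence the whole proposition reduces to showing
$$\int|x|^2\,d\mu = \frac{M_{n+1}}{M_{n-1}} = n + O(1), \qquad M_k := \int_0^\infty r^k e^{-(n-1)f(r)}\,dr.$$

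The plan is to exploit the fact that $f$ was designed precisely so that the radial density $r^{n-1}e^{-(n-1)f(r)}$ is piecewise explicit. Split $[0,\infty)$ into the three intervals $[0,a]$, $[a,2a]$, $[2a,\infty)$. On $[0,a]$ one has $e^{-(n-1)f(r)} = a^{-(n-1)}$, so the contribution to $M_k$ is $a^{k-n+2}/(k+1)$, giving $a/n$ for $k=n-1$ and $a^3/(n+2)$ for $k=n+1$. On $[a,2a]$ one has $e^{-(n-1)f(r)}=r^{-(n-1)}$, so the integrand is just $r^{k-n+1}$, yielding the flat contribution $a$ for $k=n-1$ and $7a^3/3$ for $k=n+1$. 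This is the dominant piece, and it is exactly where the factor $7/3$ (matched by the choice $a^2 = 3n/7$) makes the ratio come out to $n$: indeed, ignoring the other two intervals,
$$\frac{7a^3/3}{a} = \frac{7a^2}{3} = n.$$

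For the tail $[2a,\infty)$, substitute $s = \sqrt{r}$, $u = s - \sqrt{2a}$, which turns the $\sqrt{r}$ in the exponent into the linear variable $u$. After this change, the tail contribution to $M_k$ becomes
$$T_k = 2(2a)^{(2k+3-2n)/2}\int_0^\infty \bigl(1+u/\sqrt{2a}\bigr)^{2k+1}e^{-(n-1)u}\,du.$$
Because $\sqrt{2a} \sim n^{1/4}$ and the exponential forces $u \lesssim 1/n$, the factor $(1 + u/\sqrt{2a})^{2k+1}$ stays within $1 + O(n^{-1/4})$ on the effective range; hence $T_{n-1} = O(\sqrt{a}/n) = O(n^{-3/4})$ and $T_{n+1} = O(a^{5/2}/n) = O(n^{1/4})$. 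Both are negligible relative to the middle-interval contributions $a \sim n^{1/2}$ and $7a^3/3 \sim n^{3/2}$ respectively.

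Combining these pieces,
$$M_{n-1} = a\bigl(1+\tfrac{1}{n}+O(n^{-5/4})\bigr), \qquad M_{n+1} = \tfrac{7a^3}{3}\bigl(1+\tfrac{3}{7(n+2)}+O(n^{-5/4})\bigr),$$
so the ratio expands as
$$\frac{M_{n+1}}{M_{n-1}} = \frac{7a^2}{3}\bigl(1 + \tfrac{3}{7n} - \tfrac{1}{n} + O(n^{-2})\bigr) = n - \tfrac{4}{7} + O(1/n),$$
which gives $\int x_i^2\,d\mu = 1+\varepsilon_n$ with $|\varepsilon_n| \leq C/n$. The main obstacle is the tail estimate on $[2a,\infty)$: one must verify that the $s=\sqrt{r}$ substitution really does reduce the Laplace-type integral to a harmless $1/n$ factor, i.e.\ that the growth of $(1+u/\sqrt{2a})^{2k+1}$ does not overwhelm the decay of $e^{-(n-1)u}$. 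Once this is controlled, the rest is bookkeeping of three explicit elementary integrals.
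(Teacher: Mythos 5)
Your proof takes essentially the same route as the paper's: reduce by spherical symmetry to the ratio $M_{n+1}/M_{n-1}$ of radial integrals, compute the three pieces explicitly on $[0,a]$, $[a,2a]$, $[2a,\infty)$ (the paper simply invokes ``Laplace asymptotic method'' for the tail; your substitution $s=\sqrt r$, $u=s-\sqrt{2a}$ is the same device made explicit), and observe that the middle interval dominates with $7a^2/3=n$ by design. One small bookkeeping slip: your tail bound $T_{n+1}=O(n^{1/4})$ makes the relative error in $M_{n+1}/M_{n-1}$ of size $O(n^{-5/4})$ (not $O(n^{-2})$), so the ratio is $n-\tfrac{4}{7}+O(n^{-1/4})$ rather than $+O(1/n)$; but after dividing by $n$ this error is $O(n^{-5/4})$, still dominated by the $-4/(7n)$ term, so the claimed $|\varepsilon_n|\le C/n$ holds.
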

	
	\begin{proof}
		The density $\mu$ is spherically symmetric, hence
		$$\int_{\bR^n} x_ix_jd\mu(x)=0,$$
		for $i\ne j$, and		
		$$\int_{\bR^n} x_i^2d\mu(x)=\frac{1}{n}\int_{\bR^n} |x|^2d\mu(x),$$
		for $i=1,2,...,n$.
		Integration in spherical coordinates and using Laplace asymptotic method yields
		\begin{align*}
			\int |x|^2d\mu(x)&=\frac{\displaystyle\int_0^{\sqrt{\frac{3}{7}n}} \frac{r^{n+1}}{\left(\sqrt{\frac{3}{7}n}\right)^{n-1}}dr+\displaystyle\int_{\sqrt{\frac{3}{7}n}}^{2\sqrt{\frac{3}{7}n}}r^2dr+
	\left(\frac{e^{\sqrt{2\sqrt{\frac{3}{7}n}}}}{2\sqrt{\frac{3}{7}n}}\right)^{n-1}\displaystyle\int_{2\sqrt{\frac{3}{7}n}}^{\infty}r^{n+1}e^{-(n-1)\sqrt{r}}dr}{\displaystyle\int_0^{\sqrt{\frac{3}{7}n}} \left(\frac{r}{\sqrt{\frac{3}{7}n}}\right)^{n-1}dr+\displaystyle\int_{\sqrt{\frac{3}{7}n}}^{2\sqrt{\frac{3}{7}n}}dr+
	\left(\frac{e^{\sqrt{2\sqrt{\frac{3}{7}n}}}}{2\sqrt{\frac{3}{7}n}}\right)^{n-1}\displaystyle\int_{2\sqrt{\frac{3}{7}n}}^{\infty}r^{n-1}e^{-(n-1)\sqrt{r}}dr}\\
	&=\frac{\sqrt{\frac{3}{7}}n^{\frac{3}{2}}+O\left(\sqrt{n}\right)}{\sqrt{\frac{3}{7}n}+O\left(\frac{1}{\sqrt{n}}\right)}=n+O(1).
		\end{align*}
	\end{proof}
	
	\begin{prop} \label{concentration}
		Let $X$ be a random vector in  $\bR^n$ distributed according to $\mu$ from Proposition \ref{iso0}. Then,
		$$\bP\left(\sqrt{\frac{3}{7}n}\leq |X| \leq 2\sqrt{\frac{3}{7}n}\right)\geq 1-\frac{C}{n}.$$
	\end{prop}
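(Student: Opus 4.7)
The plan is to exploit the spherical symmetry of $\mu$. Since the density depends only on $|x|$, the law of $|X|$ has density proportional to $r^{n-1}e^{-(n-1)f(r)}$ on $[0,\infty)$, and the probability in question becomes a ratio of explicit one-dimensional integrals. Writing $a=\sqrt{3n/7}$ and $Z=\int_0^\infty r^{n-1}e^{-(n-1)f(r)}\,dr$, I would split the integration into the three regions $[0,a]$, $[a,2a]$, $[2a,\infty)$ dictated by the definition of $f$ in Proposition \ref{counter_log}, and show that the middle region already accounts for essentially all of the mass.

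The two lower pieces reduce to exact calculations. On $[a,2a]$ the identity $f(r)=\log r$ makes $r^{n-1}e^{-(n-1)f(r)}\equiv 1$, so this range contributes exactly $a$ to $Z$; in particular $Z\geq a$. On $[0,a]$ the integrand reduces to $r^{n-1}/a^{n-1}$, whose integral equals $a/n$, so dividing by $Z\geq a$ one immediately obtains $\bP(|X|<a)\leq 1/n$.

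The main step is the upper tail. On $[2a,\infty)$ the unnormalised density equals $e^{(n-1)\psi(r)}$ with $\psi(r)=\log(r/(2a))-\sqrt{r}+\sqrt{2a}$, and a direct computation gives $-\psi'(s)=(\sqrt{s}-2)/(2s)$. Since $\sqrt{s}-2$ is increasing in $s$, and $2a>4$ for $n$ large, integrating from $2a$ to $r$ yields the clean estimate
$$-\psi(r)\;\geq\;\frac{\sqrt{2a}-2}{2}\,\log\!\frac{r}{2a}.$$
Thus the density on $[2a,\infty)$ is dominated by the pure power $(2a/r)^{\alpha}$ with $\alpha=(n-1)(\sqrt{2a}-2)/2$; since $\alpha\sim n^{5/4}\to\infty$, we have $\int_{2a}^\infty(2a/r)^\alpha\,dr=2a/(\alpha-1)$, which is of order $n^{-3/4}$. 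Dividing by $Z\geq a\sim n^{1/2}$ produces $\bP(|X|>2a)\leq C/n^{5/4}$, and combining with the lower-tail estimate gives the claimed $C/n$ bound.

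The only place I expect a minor subtlety is this upper-tail estimate: one needs $2a>4$ and $\alpha>1$ for the power-law integral to converge, but both conditions are automatic for all $n$ above an absolute threshold, and for the finitely many smaller $n$ the conclusion is trivial after enlarging $C$. Nothing more delicate than this polynomial-with-huge-exponent bound is required, since the essential mechanism is simply that $\sqrt{r}$ overwhelms $\log r$ once $r$ is of order $\sqrt{n}$.
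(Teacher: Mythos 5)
Your proof is correct and follows the same skeleton as the paper's: both pass to the radial density $r^{n-1}e^{-(n-1)f(r)}$, split the normalizing integral over the three intervals $[0,a]$, $[a,2a]$, $[2a,\infty)$, and observe that the middle piece contributes exactly $a$ while the two others are negligible. The one place the treatments diverge is the upper tail: the paper simply invokes ``the same arguments as in Proposition~\ref{iso0}'' and the Laplace asymptotic method to assert that the tail contribution is $O(1/\sqrt{n})$, whereas you make this explicit by bounding $-\psi'(s)=\frac{\sqrt{s}-2}{2s}\geq\frac{\sqrt{2a}-2}{2s}$ for $s\geq 2a$, integrating to dominate $e^{(n-1)\psi(r)}$ by the power $(2a/r)^{\alpha}$ with $\alpha\sim n^{5/4}$, and evaluating $\int_{2a}^\infty(2a/r)^\alpha\,dr=2a/(\alpha-1)=O(n^{-3/4})$. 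This is a genuinely self-contained elementary replacement for the Laplace black box and yields a slightly sharper tail, $\bP(|X|>2a)=O(n^{-5/4})$, so the dominant error in the proposition really comes from $\bP(|X|<a)=1/n$, a point the paper leaves implicit. Your remark about handling small $n$ by enlarging $C$ is the right way to deal with the requirement $a\geq 100$ from Proposition~\ref{counter_log}.
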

	
	\begin{proof}
		By the same arguments as in Proposition \ref{iso0}
		\begin{align*}\bP\left(\sqrt{\frac{3}{7}n}\leq |X| \leq 2\sqrt{\frac{3}{7}n}\right)&=\frac{\displaystyle\int_{\sqrt{\frac{3}{7}n}}^{2\sqrt{\frac{3}{7}n}}dr}
		{\sqrt{\frac{3}{7}n}+O\left(\frac{1}{\sqrt{n}}\right)}=1+O\left(\frac{1}{n}\right).
	\end{align*}
	\end{proof}
	
	\begin{prop} \label{iso} Let $X$ be a random vector in $\bR^n$ distributed according to $\mu$ from Proposition \ref{iso0}, and let $\widetilde{X}$ be a random variable distributed according to $d\widetilde{\mu}=\widetilde{C_n}\left(1-\frac{(n-1)f(|x|)}{N}\right)^N_+$.
	Then for $N\geq n^{\frac{5}{2}}\log^2 n$, $\widetilde{X}$ is essentially isotropic, namely
	$$\int x_ix_jd\widetilde{\mu}(x) = (1+\varepsilon_n')\delta_{ij}$$
	for all $i,j=1,2,...,n$, when $\left|\varepsilon_n'\right|\leq\frac{C}{\sqrt{n}}$. Also
	$$\forall t,\quad\left|\bP(|X|\leq t)-\bP(|\tilde{X}|\leq t)\right|\leq \frac{C}{\sqrt{n}}.$$
	\end{prop}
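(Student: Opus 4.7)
The strategy is to compare $\widetilde{\mu}$ and $\mu$ pointwise on the annulus
\[
A = \bigl\{x \in \mathbb{R}^n : \sqrt{3n/7} \leq |x| \leq 2\sqrt{3n/7}\bigr\},
\]
where by Proposition~\ref{concentration} the measure $\mu$ concentrates. I would show that on $A$ the two radial densities agree up to a multiplicative factor $1 + O(1/\sqrt{n})$, and that $\widetilde{\mu}$ likewise places mass $1 - O(1/\sqrt{n})$ on $A$; both conclusions of the proposition then fall out by a short integration.

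The essential pointwise estimate is the elementary inequality
\[
\left| \left(1 - \tfrac{y}{N}\right)^N - e^{-y} \right| \leq e^{-y} \cdot \frac{y^2}{N}, \qquad 0 \leq y \leq N/2,
\]
obtained from Taylor expanding $\log(1-z)$. Setting $y = (n-1) f(|x|)$, for $x \in A$ we have $f(|x|) = \log |x| = O(\log n)$, so $y = O(n \log n)$ and $y^2 / N \leq C n^2 \log^2 n / (n^{5/2} \log^2 n) = C/\sqrt{n}$. Hence $(1 - (n-1)f(|x|)/N)_+^N = e^{-(n-1)f(|x|)} (1 + O(1/\sqrt{n}))$ uniformly for $x \in A$.

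I would next compare the normalizing constants $C_n$ and $\widetilde{C}_n$ by splitting the defining radial integrals at $r = \sqrt{3n/7}$ and $r = 2\sqrt{3n/7}$: the bulk contributions agree up to a factor $1 + O(1/\sqrt{n})$ by the pointwise estimate above, the $\mu$-tails are $O(1/n)$ by Proposition~\ref{concentration}, and the $\widetilde{\mu}$-tails are $O(1/\sqrt{n})$ by an analogous Laplace calculation. This yields $\widetilde{C}_n/C_n = 1 + O(1/\sqrt{n})$ and $\mathbb{P}(|\widetilde{X}| \notin A) = O(1/\sqrt{n})$, from which both conclusions are immediate. Spherical symmetry reduces the isotropy claim to $\int |x|^2 d\widetilde{\mu} = n + O(\sqrt{n})$, which follows from the density-ratio bound together with $\int |x|^2 d\mu = n + O(1)$ (Proposition~\ref{iso0}); for the Kolmogorov bound, write $\mathbb{P}(|X| \leq t) - \mathbb{P}(|\widetilde{X}| \leq t)$ as a radial integral, split at the endpoints of $A$, and apply the pointwise estimate on $A$ and the tail bounds on $A^c$.

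The main obstacle is the tail estimate for $\widetilde{\mu}$ on $[2a, \infty)$: the support of $(1 - (n-1)f/N)_+$ there extends out to $r \approx (N/(n-1))^2 \approx n^3 \log^4 n$, far beyond the effective support of $\mu$. One must verify by Laplace's method that, despite this enlarged support, the integral is still dominated by a neighborhood of $r = 2a$ --- a refinement of the asymptotic computation already carried out in Proposition~\ref{iso0}, but requiring separate handling of the intermediate regime where $e^{-(n-1)f(r)}$ and $(1 - (n-1)f(r)/N)_+^N$ diverge substantially.
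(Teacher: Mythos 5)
Your annulus-based strategy is in the same spirit as the paper's, which also compares the two radial densities via a Taylor expansion of $\log(1-y/N)^N$ and bounds tails separately. The main difference is the choice of split point: you cut at $2a = 2\sqrt{3n/7}$, whereas the paper cuts at $\alpha$ defined by $(n-1)f(\alpha) = N/2$, i.e.\ $\alpha \approx (N/(n-1))^2 \approx n^3\log^4 n$. The paper then applies the Taylor estimate on the entire interval $[0,\alpha]$ (where $(n-1)f(r)/N \leq 1/2$), picking up an extra factor $f^2(r) = O(\log^2 n)$ in the error integrals $I_k$ --- this is precisely why $N = n^{5/2}\log^2 n$ is the required scale --- and bounds the remaining integral over $[\alpha,\infty)$ by $Ce^{-n}$, which is trivial since both densities decay at least like $e^{-(n-1)\sqrt{r}}$ beyond $\alpha$.

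The genuine gap in your proposal is that you flag the control of $\widetilde{\mu}$ on $[2a,\infty)$ as the ``main obstacle,'' requiring a ``refinement of the asymptotic computation'' with ``separate handling of the intermediate regime where the two densities diverge substantially.'' But no such refinement is needed: the elementary pointwise bound $\left(1-\tfrac{y}{N}\right)^N_+ \leq e^{-y}$ holds for \emph{all} $y\geq 0$, so the unnormalized $\widetilde{\mu}$-density is dominated by the unnormalized $\mu$-density on all of $[2a,\infty)$ (and on $[0,a]$). Hence the $\widetilde{\mu}$-tail mass is at most $(\widetilde{C}_n/C_n)$ times the $\mu$-tail mass, and the latter is already $O(1/n)$ by Proposition~\ref{concentration}; combined with the $1+O(1/\sqrt n)$ comparison of the bulk this closes the normalizing-constant comparison and the tail estimate simultaneously, with no Laplace-type work needed beyond what is already in Proposition~\ref{iso0}. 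Note also that your pointwise comparison holds on all of $[0,2a]$, not just the annulus $A$, since $f$ is constant on $[0,a]$, which slightly simplifies bookkeeping. With these two observations supplied, your argument goes through and would give the claimed $O(1/\sqrt{n})$ bounds.
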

	
	\begin{proof}
		The random vector $\widetilde{X}$ is spherically symmetric. Hence
		$$\int_{\bR^n} x_ix_jd\widetilde{\mu}(x)=0,$$
		for $i\ne j$, and		
		$$\int_{\bR^n} x_i^2d\widetilde{\mu}(x)=\frac{1}{n}\int_{\bR^n} |x|^2d\widetilde{\mu}(x),$$
		for $i=1,2,...,n$.
		Since both densities are spherically symmetric, we need to estimate the one-dimensional integrals
		$$I_k=\int_0^{\infty}r^k\left(e^{-(n-1)f(r)}-\left(1-\frac{(n-1)f(r)}{N}\right)^N_+\right)dr$$
		for $k=n-1,n+1$. Define $\alpha$ by the equation $\left(\sqrt{\alpha}-\sqrt{2\sqrt{\frac{3}{7}n}}+\log\left(2\sqrt{\frac{3}{7}n}\right)\right)(n-1)=\frac{N}{2}$,
		That is, for any $r\leq \alpha$ we have $\frac{(n-1)f(r)}{N}\leq \frac{1}{2}$.
		By Taylor's Theorem, for any $r\leq \alpha$,
		$$\left|\log\left(1-\frac{(n-1)f(r)}{N}\right)^N_+-(-(n-1)f(r))\right|\leq C\frac{(n-1)^2}{N}f^2(r).$$
		Hence, for any $r\leq \alpha$
		\begin{align*}
		\left|e^{-(n-1)f(r)}-\left(1-\frac{(n-1)}{N}f(r)\right)^N_+\right|&=e^{-(n-1)f(r)}\left|1-\exp\left((n-1)f(r)-\log\left(1-\frac{(n-1)}{N}f(r)\right)^N_+\right)\right|\\
		&\leq C\frac{n^2}{N}e^{-(n-1)f(r)}f^2(r).
		\end{align*}
		Note that
		$$\left|\int_{\alpha}^{\infty}\left(e^{-(n-1)f(r)}-\left(1-\frac{(n-1)}{N}f(r)\right)^N_+\right)dr\right|\leq C\int_{\alpha}^{\infty}e^{-(n-1)f(r)}dr\leq Ce^{-n}.$$
		Combining the above inequalities, we obtain
		$$|I_k|\leq C_1\frac{n^2}{N}\int_0^{\alpha}r^ke^{-(n-1)f(r)}f^2(r)dr+C_2e^{-n}\leq C\frac{n^2}{N}\int_0^{\infty}r^ke^{-(n-1)f(r)}f^2(r)dr.$$
		Hence,
		$$|I_{n-1}|\leq C\frac{n^2}{N}\left(\sqrt{n}\log^2n + O\left(\frac{\log^2n}{\sqrt{n}}\right)\right)\leq C_1,$$
		$$|I_{n+1}|\leq C\frac{n^2}{N}\left(n^{\frac{3}{2}}\log^2n + O\left(\log^2n\sqrt{n}\right)\right)\leq C_2n.$$
		By the estimation on $I_{n-1}$, and the calculations in Proposition \ref{iso0} we obtain
		\begin{align*}
		\left|\displaystyle\int_0^{\infty}\left(1-\frac{(n-1)}{N}f(r)\right)^N_+dr-\sqrt{\frac{3}{7}n}\right|&\leq
		\left|\displaystyle\int_0^{\infty}\left(1-\frac{(n-1)}{N}f(r)\right)^N_+dr-\displaystyle\int_0^{\infty}e^{-(n-1)f(r)}dr\right|+O\left(\frac{1}{\sqrt{n}}\right)\\
		&=|I_{n-1}|+O\left(\frac{1}{\sqrt{n}}\right)\leq C_1.
		\end{align*}
		Hence,
		\begin{itemize}
			\item
			$\left(\displaystyle\int_0^{\infty}\left(1-\frac{(n-1)}{N}f(r)\right)^N_+dr\right)^{-1}=\sqrt{\frac{1}{\frac{3}{7}n}}\left(1+O\left(\frac{1}{\sqrt{n}}\right)\right)$;
			\item
			$\forall t,\quad\left|\bP(|X|\leq t)-\bP(|\tilde{X}|\leq t)\right|\leq \frac{C}{\sqrt{n}}$.
		\end{itemize}
		By the estimation of $I_{n+1}$ we obtain,
		$$\left|\bE X_i^2-\bE \tilde{X}_i^2 \right|=\frac{1}{n}\left|\bE |X|^2-\bE |\tilde{X}|^2 \right|\leq C\frac{1}{\sqrt{n}}\frac{1}{n}|I_{n+1}|\leq \frac{C}{\sqrt{n}}.$$
	\end{proof}
	
	\begin{remark}
		It is possible to take $a\approx \sqrt{\frac{3}{7}n}$ in the definition of $f$, such that $\widetilde{X}$ is isotropic.
	\end{remark}
	
	We use the following estimation in our proof of Theorem \ref{thm:weakconvex}.
	\begin{prop} \label{prop:bernstein}
		Let $Z_1,..,Z_n$ be independent standard Gaussian random variables, and let $0<\delta<\frac{1}{2}$. Then,
		$$\bP\left(\left|\sqrt{Z_1^2+...+Z_n^2}-\sqrt{n}\right|\leq n^\delta\right)\geq 1-Ce^{-cn^{2\delta}},$$
		where $c,C>0$ are constants.
	\end{prop}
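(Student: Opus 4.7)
The plan is to deduce this from the standard Gaussian concentration inequality applied to the Euclidean norm, which is $1$-Lipschitz on $\bR^n$.

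First, I would observe that $R := \sqrt{Z_1^2+\cdots+Z_n^2} = \|Z\|_2$ is a $1$-Lipschitz function of the standard Gaussian vector $Z = (Z_1,\ldots,Z_n)$. By the Borell--Sudakov--Tsirelson Gaussian concentration inequality,
$$\bP\bigl(|R - \bE R| > t\bigr) \leq 2e^{-t^2/2}$$
for every $t > 0$. The same inequality (or the Gaussian Poincar\'e inequality, applied to $R$) gives $\textrm{Var}(R) \leq 1$, and combining this with the trivial identity $\bE R^2 = n$ yields
$$(\bE R)^2 = \bE R^2 - \textrm{Var}(R) \geq n - 1,$$
so $\sqrt{n} - 1 \leq \bE R \leq \sqrt{n}$, i.e.\ $|\bE R - \sqrt{n}| \leq 1$.

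Then I would simply chain these two facts. Assuming $n$ is large enough that $n^\delta \geq 2$ (the small-$n$ range can be absorbed into the constant $C$), setting $t = n^\delta - 1$ gives
$$\bP\bigl(|R - \sqrt{n}| > n^\delta\bigr) \leq \bP\bigl(|R - \bE R| > n^\delta - 1\bigr) \leq 2e^{-(n^\delta - 1)^2/2} \leq C e^{-c n^{2\delta}}$$
for suitable universal $c, C > 0$, which is exactly the claim.

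There is no real obstacle here; the only thing to be slightly careful about is that the concentration inequality is naturally centered at $\bE R$ (or the median), not at $\sqrt{n}$, so one has to absorb the $O(1)$ discrepancy between $\bE R$ and $\sqrt{n}$ into the target radius $n^\delta$. This is harmless precisely because $\delta > 0$, so $n^\delta \to \infty$. Note also that the restriction $\delta < \frac{1}{2}$ is natural: for $\delta \geq \frac{1}{2}$ the claimed probability bound becomes vacuous (or at least no better than a uniform constant), while for $\delta < \frac{1}{2}$ the Gaussian tail $e^{-cn^{2\delta}}$ is genuinely small.
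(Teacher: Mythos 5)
Your proof is correct, but it takes a genuinely different route from the paper. The paper first observes that $\bigl|\sqrt{\sum Z_i^2}-\sqrt{n}\bigr|\sqrt{n}\leq \bigl|\sum Z_i^2-n\bigr|$, thereby reducing the problem to a tail bound for the centered sum $\sum(Z_i^2-1)$; it then verifies the moment condition $\bE|Z_i^2-1|^m\leq 4^m m!$ and invokes Bernstein's inequality for sums of independent random variables with sub-exponential moment growth. You instead work directly with $R=\|Z\|_2$ as a $1$-Lipschitz functional of the Gaussian vector, apply the Borell--Sudakov--Tsirelson concentration inequality around $\bE R$, and then use the Gaussian Poincar\'e inequality together with $\bE R^2=n$ to show $|\bE R-\sqrt{n}|\leq 1$ so the recentering from $\bE R$ to $\sqrt{n}$ costs only an $O(1)$ shift, negligible since $n^\delta\to\infty$. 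Your route is shorter and avoids the square-root/square reduction step entirely, at the cost of invoking the heavier Lipschitz concentration machinery plus the Poincar\'e bound as an auxiliary fact; the paper's route is more elementary (only classical Bernstein plus direct moment estimates) but requires that intermediate algebraic reduction. Both are standard and both deliver the stated rate $e^{-cn^{2\delta}}$.
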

	
	\begin{proof}
		Note that
		$$\left|Z_1^2+\cdots+Z_n^2-n\right|=\left|\sqrt{Z_1^2+\cdots+Z_n^2}-\sqrt{n}\right|\left|\sqrt{Z_1^2+\cdots+Z_n^2}+\sqrt{n}\right|\geq \left|\sqrt{Z_1^2+\cdots+Z_n^2}-\sqrt{n}\right|\sqrt{n}.$$
		Therefore it is enough to show that
		$$\bP\left(\left|Z_1^2+\cdots+Z_n^2-n\right|\leq n^{\delta+\frac{1}{2}}\right)\geq 1-Ce^{-cn^{2\delta}}.$$
		Note that for all $m\geq 1$ and for all $i=1,...,n$, we have
		$$\bE|Z_i^2-1|^m\leq \sum_{k=1}^m\binom{m}{k}\bE Z_i^{2k}\leq 2^m(2m)!!\leq 4^mm!$$
		where $(2m)!!=1\cdot 3\cdot 5\cdots (2m-1)$.
		Hence, by Bernstein's inequality \cite{B} we obtain
		$$\bP\left(\left|(Z_1^2-1)+\cdots+(Z_n^2-1)\right|> n^{\frac{1}{2}+\delta}\right)\leq Ce^{-cn^{2\delta}}.$$
	\end{proof}
	
	We are now ready to prove Theorem \ref{thm:weakconvex}.
	\begin{proof} By Proposition \ref{counter_log}, the function $(n-1)f(|x|)$ is $\frac{1}{2}-$convex. Proposition \ref{func_bodies} with $N=n^{\frac{5}{2}}\log^2 n$ yields a $\frac{1}{2}-$convex body $K$.
	Let $X$ be a random vector distributed uniformly in $K$. By the definition of $K$ the marginal of $X$ with respect to the first $n$ coordinates has density proportional to $\left(1-\frac{(n-1)f(|x|)}{N}\right)^N_+$. Denote this subspace by $E$. By Proposition \ref{iso}, $\textrm{Proj}_E X$ is essentially
	 isotropic.
	 Let $G$ be a standard Gaussian random variable.
	In order to show that $Y=\textrm{Proj}_E X$ has no approximately Gaussian marginals, we examine $\bP(|\theta_0\cdot Y| \leq t)$, for any $\theta_0\in S^{n-1}$.
	Using the symmetry of $Y$ and the rotation invariance of $\sigma_{n-1}$, we obtain,
	\begin{align*}\bP(|\theta_0\cdot Y| \leq t) = \bE 1_{[0,t]}(|\theta_0\cdot Y|)&=\int_{S^{n-1}}\bE 1_{[0,t]}(|\theta\cdot Y|) d\sigma_{n-1}(\theta)&\\
	&=\bE \int_{S^{n-1}} 1_{[0,t]}(\theta_1|Y|) d\sigma_{n-1}(\theta),&\end{align*}
	where $\theta=(\theta_1,...,\theta_n)$.
	Let $Z=(Z_1,...,Z_n)$, where $Z_i$ are independent standard Gaussian random variable.
	Since $Z$ is invariant under rotations, $\frac{Z}{|Z|}$ is distributed uniformly on $S^{n-1}$. Hence,
	\begin{align*}\bP(|\theta_0\cdot Y| \leq t) &=\bP\left(|Z_1||Y|\leq t\sqrt{Z_1^2+\cdots+Z_n^2}\right).&\end{align*}
	By the Proposition \ref{prop:bernstein}, $\bP(|\sqrt{Z_1^2+\cdots+Z_n^2}-\sqrt{n}|\leq n^{\frac{1}{100}})\geq 1-Ce^{-cn^{\frac{1}{50}}}$.
	Hence,
	\begin{align}\label{marginal}\bP(|\theta_0\cdot Y| \leq t) &= \bP\left(|Z_1||Y|\leq t\sqrt{n}\left(1+O\left(n^{-\frac{1}{2}+\frac{1}{100}}\right)\right)\right)+O\left(e^{-cn^{\frac{1}{50}}}\right).\end{align}
	By Propositions \ref{iso} and \ref{concentration}, there exists a random vector $Y'$ such that
	$$\forall t\; \left|\bP \left(|Y'|\leq t\right)-\bP\left(|Y|\leq t\right)\right|\leq \frac{C}{\sqrt{n}},\;\;\bP\left(\sqrt{\frac{3}{7}n}\leq |Y'| \leq 2\sqrt{\frac{3}{7}n}\,\right)\geq 1-\frac{C}{n},$$
	and $|Y'|$ has constant density function on $\left[\sqrt{\frac{3}{7}n},2\sqrt{\frac{3}{7}n}\right]$. By the triangle inequality, for $W$ distributed uniformly on $\left[\sqrt{\frac{3}{7}},2\sqrt{\frac{3}{7}}\right]$ and any $\sqrt{\frac{3}{7}}\leq\alpha\leq\beta\leq2\sqrt{\frac{3}{7}}$ we have
	$$|\bP(\sqrt{n}\alpha\leq |Y| \leq \sqrt{n}\beta)-\bP(\alpha\leq W \leq \beta)|\leq \frac{C}{\sqrt{n}}.$$
	Combining with (\ref{marginal}),
	$$\bP(|\theta_0\cdot Y| \leq t) = \bP\left(|G|W\leq t(1+O(n^{-\frac{1}{2}+\frac{1}{100}}))\right)+O\left(\frac{1}{\sqrt{n}}\right).$$
	We conclude that $|Y\cdot\theta_0|$ is very close to a distribution which is the product of a Gaussian with a uniform random variable, and the latter distribution is far from Gaussian.
	\end{proof}
	

\end{document}